\documentclass[12pt,oneside]{amsart}
\usepackage{amsmath,amssymb,latexsym,soul,cite,mathrsfs}
\usepackage{enumitem}
\usepackage{color,enumitem,graphicx}
\usepackage[colorlinks=true,urlcolor=blue,
citecolor=red,linkcolor=blue,linktocpage,pdfpagelabels,
bookmarksnumbered,bookmarksopen]{hyperref}
\usepackage[english]{babel}
\usepackage{lineno}
\usepackage[left=2.4cm,right=2.4cm,top=2.5cm,bottom=2.4cm]{geometry}

\usepackage[hyperpageref]{backref}
\numberwithin{equation}{section}
\newtheorem{theorem}{Theorem}[section]
\newtheorem{lemma}[theorem]{Lemma}

\newtheorem{remark}[theorem]{Remark}

\title[Gradient term for the Pucci extremal operators]{On a quadratic gradient natural term for the Pucci extremal operators}

\author[J.F.\ de Oliveira]{Jos\'{e} Francisco de Oliveira}
\author[J.M. do \'{O}]{Jo\~{a}o Marcos do \'{O}}
\author[P. Ubilla]{Pedro Ubilla}
\author[A. Macedo]{Abiel Macedo}

\address[J.F.\ de Oliveira]{
\newline\indent Department of Mathematics
	\newline\indent 
	Federal University of Piau\'{i}
	\newline\indent
	64049-550 Teresina, PI, Brazil}
	\email{\href{mailto:jfoliveira@ufpi.edu.br}{jfoliveira@ufpi.edu.br}}	
	
\address[J.M. do \'{O}]{\newline\indent Department of Mathematics
\newline\indent 
Federal University of Para\'{\i}ba
\newline\indent
58051-900 Jo\~{a}o Pessoa, PB, Brazil}
\email{\href{mailto:jmbo@mat.ufpb.br}{jmbo@mat.ufpb.br}}

\address[P. Ubilla]{\newline\indent Departamento de Matematica
\newline\indent 
Universidad de Santiago de Chile
\newline\indent
Casilla 307, Correo 2, Santiago, Chile}
\email{\href{mailto:pedro.ubilla@usach.cl}{pedro.ubilla@usach.cl}}

\address[A. Macedo]{\newline\indent Instituto de Matemática e Estatística
\newline\indent Universidade Federal de Goi\'{a}s,
\newline\indent
74001-970  Goi\^{a}nia, Brazil}
\email{\href{mailto:abielcosta@ufg.br}{abielcosta@ufg.br}}

%\communicated{...}
%\dedication{...}
	
\subjclass[2010]{Primary 35J60, Secondary 35J25, 35J65, 35J66}
\keywords{Pucci extremal operators, fully nonlinear elliptic equations,  Kazdan-Kramer change, Liouville type theorems, asymptotic behavior.}

\begin{document}
\maketitle
\begin{abstract}
We introduce a quadratic gradient type term for the Pucci extremal operators. Our analysis demonstrates that this proposed term extends the classical quadratic gradient term associated with the Laplace equation, and we investigate the impact of the Kazdan-Kramer transformation. As an application, we explore the existence, non-existence, uniqueness, Liouville-type results, and asymptotic behavior of solutions for the new class of Pucci equations under various conditions on both nonlinearity and domains.
\end{abstract}
\section{Introduction}
\noindent Let $0<\lambda\le \Lambda$ and $M$ be a symmetric $n\times n$ matrix. Denote $\mathcal{M}^{\pm}(M)=\mathcal{M}^{\pm}_{\lambda,\Lambda}(M)$  the matrix operators be given by
\begin{align*}
    \mathcal{M}^{+}_{\lambda,\Lambda}(M)=\Lambda\sum_{e_i>0}e_i+\lambda\sum_{e_i<0}e_i\;\;\mbox{and}\;\;  \mathcal{M}^{-}_{\lambda,\Lambda}(M)=\lambda\sum_{e_i>0}e_i+\Lambda\sum_{e_i<0}e_i
\end{align*}
where $e_i=e_i(M)$, $i=1,\cdots, n$ are the eigenvalues of $M$. The Pucci extremal operators \cite{Pucci1,Cabre} are defined by  $ \mathcal{M}^{+}_{\lambda,\Lambda}(D^2u)$ and  $\mathcal{M}^{-}_{\lambda,\Lambda}(D^2 u)$, where $u$ is a scalar function defined on an open set $\Omega\subset\mathbb{R}^n$ and $D^2u$ denotes the Hessian matrix of $u$. These operators frequently arise in the context of optimal stochastic control problems and can be applied in financial mathematics, see for instance \cite{Avella,Lions}.  There are several works \cite{Arm-Sira,Pucci1,Cabre,Felmer,Esteban, FelmerProc,FelmerTransations} that have addressed to investigate  the Pucci equations
\begin{equation}
	\begin{cases}\label{Pucci-eq1}
	\mathcal{M}^{\pm}_{\lambda,\Lambda}(D^2 u)+f(x,u)=0	 & \mbox{in} \;\; \Omega\\
		 u=0  & \mbox{on}\;\;\partial \Omega,	
	\end{cases}
\end{equation}
where $f:\overline{\Omega}\times [0,\infty)\to \mathbb R$ is a continuous function. We recommend \cite{FelmerProc} for a review about some results on equations involving the Pucci extremal operators.

If $\lambda=\Lambda=1$,  then $\mathcal{M}^{\pm}_{\lambda,\Lambda}(D^2 u)=\mathrm{trace}(D^2u)=\Delta u$ is the Laplace operator. In this case, motivated by both non-existence results from J. Serrin \cite{Serrin} and an invariant criteria proposed by Kazdan and Kramer \cite{kazdan}, the following Laplace equation with quadratic growth in the gradient has been extensively studied
\begin{equation}\label{KK-problem}
    \left\{\begin{aligned}
   &\Delta u+|\nabla u|^2+ f(x,u)=0 & \;\;&\mbox{in}&\;\;\Omega\\
& u=0& \;\;&\mbox{on}&\;\;\partial\Omega.
    \end{aligned}
    \right.
\end{equation}
Equations like \eqref{KK-problem} have the advantage  of being able to be transformable into a new  Laplace equation without the quadratic term  $|\nabla u|^2$. Indeed, by using the Kazdan-Kramer change of variables $v=e^u-1$, $u>0$  the problem ~(\ref{KK-problem}) becomes equivalent to the following
	\begin{equation}\label{KK-problem2}
	\begin{cases}
		\Delta v+ (1+v)f(x,\log(1+v))=0 & \mbox{in}\;\; \Omega\\
		\ v=0 & \mbox{on}\;\; \partial \Omega. 
	\end{cases}
	\end{equation}
This approach has been successfully applied by many authors and several results have been achieved, see \cite{kazdan,MET2,JEANJEAN1,JEANJEAN2, MET1,Arcoya} and references therein. Extensions for the $p$-Laplace equation can be found in \cite{ubilla, Adimurthi1}, and for the $k$-Hessian equation in \cite{Nosso,Monge}.

In this paper, we investigate the existence of nonnegative solutions to the following Pucci equations  
\begin{equation}\nonumber
\hypertarget{Pmais}{(\mathcal{P}^{\pm}_{g})}\quad\quad\quad	\begin{cases}\label{Pucci-full}
	\mathcal{M}^{\pm}_{\lambda,\Lambda}\big(D^2 u+g(u)\nabla u\otimes\nabla u\big)+f(x,u)=0	 & \mbox{in} \;\; \Omega\\
		 u=0  & \mbox{on}\;\;\partial \Omega.
	\end{cases}
\end{equation}
Here $\xi\otimes\xi$ represents the matrix $(\xi_i\xi_j)$ and $f:\overline{\Omega}\times [0,\infty)\to \mathbb R$ and $g:[0,\infty)\to [0,\infty)$ are continuous functions. Note that, for  $\lambda=\Lambda$ we have $\mathcal{M}^{\pm}_{\lambda,\Lambda}(M)=\lambda\, \mathrm{trace}(M)$ for any symmetric $n\times n$ matrix $M$. Therefore, if $\lambda=\Lambda=1$ and $g\equiv 1$, then 
$$\mathcal{M}^{\pm}_{\lambda,\Lambda}\big(D^2 u+g(u)\nabla u\otimes\nabla u\big)=\Delta u+|\nabla u|^2.$$
Thus,  \hyperlink{Pmais}{$(\mathcal{P}^{\pm}_{g})$}  reduces to  the Laplace equation with a quadratic gradient \eqref{KK-problem}. This means that  \hyperlink{Pmais}{$(\mathcal{P}^{\pm}_{g})$} represents the Pucci equation with a quadratic gradient type term corresponding to the pure equation \eqref{Pucci-eq1}. 

In this note, we present extensions  for the new class of Pucci equations  \hyperlink{Pmais}{$(\mathcal{P}^{\pm}_{g})$} of some of the results regarding existence, non-existence, uniqueness, asymptotic behavior, and Liouville type results obtained by Felmer-Sirakov \cite{Sirakov}, Felmer-Quaas \cite{Felmer}, Cutrì-Leoni \cite{Cutri}, and Felmer-Quaas-Tang \cite{FelmerTang} for the  Pucci equation \eqref{Pucci-eq1}.

Before we state our results and to precise the assumptions we are considering on the pair of functions $f,g$, we first define the following  Kazdan-Kramer type change
\begin{equation}\label{a-KK}
    \varphi_{g}(s)=\int_{0}^{s}e^{G(t)}dt, \;\; \mbox{where}\;\; G(t)=\int_{0}^{t}g(\tau)d\tau.
\end{equation}
Note that $\varphi_{g}$ is a $C^2$ diffeomorphism on $[0,\infty)$ with $\varphi_g(0)=0$ and $\varphi_g>0$ in $(0,\infty)$. Hence, we can also define the function $h:\overline{\Omega}\times[0,\infty)\to \mathbb{R}$ given by
\begin{equation}\label{f-transformed}
   h(x,s)=e^{G(\varphi^{-1}_{g}(s))}f(x,\varphi^{-1}_g(s)).
\end{equation}
\subsection{Sub- and superlinear growths} In this section we are assuming that $\Omega\subset\mathbb{R}^n$ is a bounded regular domain. Let us consider $G$, $\varphi_g$ and $h$ be given by \eqref{a-KK} and \eqref{f-transformed}. We will consider the following hypotheses on $f$ and $g$: 
\begin{enumerate}
    \item [\hypertarget{fg0}{$(fg_0)\;$}] the function $h$ is H\"{o}lder continuous on $\overline{\Omega}\times [0,\infty)$, with  $f(x,0)=0$ and $f(x,t)\ge -\gamma \varphi_g(t)$, for all $(x,t)\in \overline{\Omega}\times [0,\infty)$ and for some $\gamma\ge 0$,\\
    
    \item [\hypertarget{fg_I}{$(fg_{I})\;$}] $\displaystyle\limsup_{t\to\infty}\frac{e^{G(t)}f(x,t)}{\varphi_{g}(t)}<\mu^{+}_1<\displaystyle\liminf_{t\to 0}\frac{e^{G(t)}f(x,t)}{\varphi_{g}(t)}\le \infty$, uniformly in $x\in\overline{\Omega}$,\\

    \item [\hypertarget{fg^I}{$(fg^I)\;$}] $\displaystyle\limsup_{t\to 0}\frac{e^{G(t)}f(x,t)}{\varphi_{g}(t)}<\mu^{+}_1<\displaystyle\liminf_{t\to \infty}\frac{e^{G(t)}f(x,t)}{\varphi_{g}(t)}\le \infty$, uniformly in $x\in\overline{\Omega},$\\
\end{enumerate}
where $\mu^{+}_1$ is the first eigenvalue of the Pucci operator $\mathcal{M}^{+}_{\lambda,\Lambda}$ associated to a positive eigenfunction, see \cite{Esteban} for details. If \hyperlink{fg_I}{$(fg_{I})$} holds  we say that the pair $f,g$ has sublinear growth, while  the pair $f,g$ has superlinear growth if \hyperlink{fg^I}{$(fg^{I})$}  holds.

\begin{remark}\label{remark-gP1} Let us consider $g(s)=ms^{m-1}$ with $m>0$ a power type nonlinearity, $f_1(x,t)=t^{p}e^{-t^m}$ with $0<p<1$ and $f_2(x,t)=\nu te^{t^q-t^m}$, with $q>m$ and $0<\nu<\mu^{+}_1$. The  functions $G$ and $\varphi_g$ in \eqref{a-KK}  become
 $G(t)=t^{m}$ and $\varphi_g(t)=\int_{0}^{t}e^{\tau^m}d\tau$. It is easy to see that
\begin{equation}\label{g-powergrowth}
\left\{ \begin{aligned}    &\lim_{t\to 0}\frac{\varphi_g(t)}{t}=1\\
&\lim_{t\to \infty}\frac{\varphi_g(t)}{t}=+\infty
\\
 &  \lim_{t\to \infty}\frac{\varphi_g(t)}{e^{t^q}}=0.
\end{aligned}\right.
\end{equation}
From \eqref{g-powergrowth}, we have
\begin{equation}
\begin{aligned}
   &\limsup_{t\to \infty}\frac{e^{G(t)}f_1(x,t)}{\varphi_{g}(t)}=\limsup_{t\to \infty}\frac{t^{p}}{t}\frac{t}{\varphi_{g}(t)}=0,\\
  & \liminf_{t\to 0} \frac{e^{G(t)}f_1(x,t)}{\varphi_{g}(t)}=\liminf_{t\to 0}\frac{t^{p}}{t}\frac{t}{\varphi_{g}(t)}=+\infty.
\end{aligned}
\end{equation}
Thus, the pair $f_1,g$ has sublinear growth \hyperlink{fg_I}{$(fg_{I})$}. Analogously, \eqref{g-powergrowth} yields
\begin{equation}
\begin{aligned}
 &  \limsup_{t\to 0}\frac{e^{G(t)}f_2(x,t)}{\varphi_{g}(t)}=\limsup_{t\to 0}\frac{\nu t}{\varphi_g(t)}e^{t^q}=\nu,\\
 &  \liminf_{t\to \infty} \frac{e^{G(t)}f_2(x,t)}{\varphi_{g}(t)}=\liminf_{t\to \infty}\frac{\nu t}{\varphi_g(t)}e^{t^q}=+\infty.
\end{aligned}
\end{equation}
Consequently, the pair $f_2,g$ has superlinear growth \hyperlink{fg^I}{$(fg^{I})$}.
\end{remark}
\begin{remark} In general, if $g(s)=ms^{m-1}$ with $m>0$  and $f(x,t)=\left(\int_{0}^{t}e^{\tau^{m}}d\tau\right)^{p}e^{-t^{m}}$, then the pair $f,g$ satisfies \hyperlink{fg0}{$(fg_0)$} and \hyperlink{fg_I}{$(fg_{I})$} for $0<p<1$, while it satisfies \hyperlink{fg0}{$(fg_0)$} and  \hyperlink{fg^I}{$(fg^{I})$} if $p>1$. 
\end{remark}

Our first result concerns to the existence of solutions for the equation \hyperlink{Pmais}{$(\mathcal{P}^{+}_{g})$} under the sublinear growth condition.
\begin{theorem}\label{thm1} Suppose the pair $f,g$ satisfies \hyperlink{fg0}{$(fg_0)$}  and \hyperlink{fg_I}{$(fg_{I})$}. Then  problem \hyperlink{Pmais}{$(\mathcal{P}^{+}_{g})$}  has a positive solution. 
\end{theorem}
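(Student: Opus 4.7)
The plan is to reduce Theorem \ref{thm1} to a classical existence result for the pure Pucci equation \eqref{Pucci-eq1} by means of the Kazdan--Kramer type change \eqref{a-KK}, and then transfer the solution back.

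First I would verify that the change of variable $v=\varphi_g(u)$ transforms \hyperlink{Pmais}{$(\mathcal{P}^{+}_{g})$} into the pure Pucci equation
\begin{equation*}
\mathcal{M}^{+}_{\lambda,\Lambda}(D^2 v)+h(x,v)=0\;\;\mbox{in}\;\;\Omega,\quad v=0\;\;\mbox{on}\;\;\partial\Omega,
\end{equation*}
with $h$ as in \eqref{f-transformed}. Indeed, since $\varphi_g'(t)=e^{G(t)}$ and $\varphi_g''(t)=g(t)e^{G(t)}$, direct computation gives $\nabla v=e^{G(u)}\nabla u$ and
\begin{equation*}
D^2 v=e^{G(u)}\bigl(D^2 u+g(u)\nabla u\otimes \nabla u\bigr).
\end{equation*}
Using the positive $1$-homogeneity of $\mathcal{M}^+_{\lambda,\Lambda}$, the inner expression transforms as $\mathcal{M}^+(D^2u+g(u)\nabla u\otimes \nabla u)=e^{-G(u)}\mathcal{M}^+(D^2 v)$. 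Multiplying the equation by $e^{G(u)}$ and writing $u=\varphi_g^{-1}(v)$ produces the displayed pure Pucci problem. This manipulation is legitimate both at the classical level and in the viscosity sense, since $\varphi_g$ is a $C^2$ diffeomorphism of $[0,\infty)$.

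Next I would translate the hypotheses. Setting $s=\varphi_g(t)$, so that $s\to 0$ iff $t\to 0$ and $s\to \infty$ iff $t\to \infty$ (recall $g\ge 0$ implies $\varphi_g(t)\ge t$), one has $h(x,s)/s=e^{G(t)}f(x,t)/\varphi_g(t)$. Hence \hyperlink{fg_I}{$(fg_I)$} is precisely the sublinear condition
\begin{equation*}
\limsup_{s\to \infty}\frac{h(x,s)}{s}<\mu^+_1<\liminf_{s\to 0}\frac{h(x,s)}{s}\le \infty,
\end{equation*}
while \hyperlink{fg0}{$(fg_0)$} gives that $h$ is H\"older continuous on $\overline{\Omega}\times [0,\infty)$ with $h(x,0)=0$ and $h(x,s)\ge -\gamma s$.

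With the transformed problem in hand I would invoke the existence result of Felmer--Quaas (and related works of Busca--Esteban--Quaas and Quaas--Sirakov) for the sublinear pure Pucci equation, which yields a positive viscosity solution $v\in C(\overline{\Omega})$ vanishing on $\partial\Omega$; H\"older regularity of $h$ together with standard $C^{2,\alpha}$ estimates for the Pucci operator then upgrades $v$ to a classical solution. Finally, setting $u=\varphi_g^{-1}(v)$ yields a positive classical solution of \hyperlink{Pmais}{$(\mathcal{P}^{+}_{g})$}, since $\varphi_g^{-1}$ is a smooth strictly increasing diffeomorphism with $\varphi_g^{-1}(0)=0$.

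The main technical point is justifying the change of variables at the level of viscosity solutions and matching the precise form of the sublinear existence theorem available in the literature for $\mathcal{M}^+_{\lambda,\Lambda}$; the bound $h(x,s)\ge -\gamma s$ coming from \hyperlink{fg0}{$(fg_0)$} is exactly the hypothesis required there to keep the zeroth-order term coercive enough for the a priori estimates/degree argument to go through. The sublinear/superlinear trichotomy is captured cleanly by the $\mu^+_1$ threshold, so no new bifurcation analysis is needed: the proof is really a verification that the Kazdan--Kramer substitution \eqref{a-KK} is the correct one for the operator $\mathcal{M}^+_{\lambda,\Lambda}(D^2u+g(u)\nabla u\otimes \nabla u)$.
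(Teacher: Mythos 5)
Your proposal is correct and follows essentially the same route as the paper: apply the Kazdan--Kramer change $v=\varphi_g(u)$ via Lemma~\ref{lemma1}/Lemma~\ref{via-lemma} to reduce $(\mathcal{P}^{+}_{g})$ to the pure Pucci problem with the transformed nonlinearity $h$, translate $(fg_0)$ and $(fg_I)$ into the Hölder/sign and sublinear conditions on $h$ via the substitution $s=\varphi_g(t)$, and invoke the Quaas--Sirakov sublinear existence theorem (\cite[Theorem~1.1]{Sirakov}, which already delivers a classical solution so the viscosity-to-classical upgrade step is not needed), then set $u=\varphi_g^{-1}(v)$.
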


Following \cite{Sirakov}, to state our result for the superlinear growth, we consider the family of problems obtained from \eqref{Pucci-eq1} by using $h(x,u+t)$ instead of $f(x,u)$, for $t\ge 0$; where $h$ is defined such in \eqref{f-transformed}. Denote by  $A_t$ the set of nonnegative classical solutions for any such problem and denote $\mathcal{S}_{t}=\displaystyle\cup_{0\le s\le t}A_s$.
\begin{theorem}\label{thm2}
 Assume the pair $f,g$ satisfies  \hyperlink{fg0}{$(fg_0)$} and \hyperlink{fg^I}{$(fg^{I})$}.  Suppose that for each $t\ge0$ there exists a constant $c>0$ depending only on $t$, $\Omega$ and $h$ such that 
 \begin{equation}\label{shift-solution}
     \|u\|_{L^{\infty}}\le c,\;\;\mbox{for all}\;\; u\in \mathcal{S}_{t}.
 \end{equation}
  Then  problem \hyperlink{Pmais}{$(\mathcal{P}^{+}_{g})$}  has a positive solution.
\end{theorem}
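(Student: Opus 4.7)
The plan is to use the Kazdan-Kramer change of variables $v=\varphi_g(u)$ to reduce problem \hyperlink{Pmais}{$(\mathcal{P}^{+}_{g})$} to a standard Pucci equation without the quadratic gradient term, and then to apply the superlinear existence theorem of Felmer-Sirakov \cite{Sirakov}. Concretely, from $\varphi_g'(t)=e^{G(t)}>0$ and $\varphi_g''(t)=g(t)e^{G(t)}$ one computes
\[
D^2 v = e^{G(u)}\bigl[D^2 u + g(u)\,\nabla u\otimes\nabla u\bigr],
\]
so by the positive $1$-homogeneity of the Pucci operator, $\mathcal{M}^{+}_{\lambda,\Lambda}(D^2 v)=e^{G(u)}\mathcal{M}^{+}_{\lambda,\Lambda}(D^2 u+g(u)\nabla u\otimes\nabla u)$. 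Using \eqref{f-transformed}, problem \hyperlink{Pmais}{$(\mathcal{P}^{+}_{g})$} is then equivalent to the transformed problem
\[
\mathcal{M}^{+}_{\lambda,\Lambda}(D^2 v)+h(x,v)=0 \ \ \text{in}\ \Omega,\qquad v=0 \ \ \text{on}\ \partial\Omega,
\]
with classical positive solutions corresponding bijectively through $u=\varphi_g^{-1}(v)$.

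Next, writing $s=\varphi_g(t)$ converts condition \hyperlink{fg^I}{$(fg^{I})$} into
\[
\limsup_{s\to 0}\frac{h(x,s)}{s}<\mu^{+}_1<\liminf_{s\to\infty}\frac{h(x,s)}{s}\le\infty
\]
uniformly in $\overline{\Omega}$, which is the superlinear Pucci condition on $h$. Together with \hyperlink{fg0}{$(fg_0)$} (yielding H\"older regularity of $h$, $h(x,0)=0$, and a lower bound $h(x,s)\ge -\tilde\gamma s$ near $s=0$), this places the reduced problem exactly in the framework of \cite{Sirakov}. Moreover, $\mathcal{S}_t$ is precisely the family of nonnegative classical solutions of the shifted equation $\mathcal{M}^{+}_{\lambda,\Lambda}(D^2 v)+h(x,v+s)=0$, $v|_{\partial\Omega}=0$, for $s\in[0,t]$, so assumption \eqref{shift-solution} gives exactly the a priori $L^\infty$ bound used in the topological degree argument.

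I would then carry out a Leray-Schauder degree computation on the compact operator $K\colon C^{1,\alpha}_0(\overline{\Omega})\to C^{1,\alpha}_0(\overline{\Omega})$ defined by $K(v)=w$, where $w$ is the unique $C^{2,\alpha}$ solution of $\mathcal{M}^{+}_{\lambda,\Lambda}(D^2 w)=-h(x,v)$ with $w|_{\partial\Omega}=0$ (the Caffarelli-Cabr\'e regularity theory for fully nonlinear elliptic equations supplies existence and uniqueness). Fixed points of $K$ solve the transformed problem. The subcritical behaviour $h(x,s)/s<\mu^{+}_1$ near $s=0$ yields, via comparison with the principal eigenfunction of $\mathcal{M}^{+}_{\lambda,\Lambda}$, that $\deg(I-K,B_\epsilon,0)=1$ for $\epsilon$ small. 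Writing $K_{t}$ for the operator associated with $h(x,\cdot+t)$, the supercritical behaviour at infinity forces, for $t_0$ sufficiently large, the absence of solutions of the shifted problem, and therefore $\deg(I-K_{t_0},B_R,0)=0$ on a large ball $B_R$. Homotopy invariance along $t\in[0,t_0]$, admissible thanks to \eqref{shift-solution}, transfers this to $\deg(I-K,B_R,0)=0$. Excision then produces a nontrivial fixed point $v\in B_R\setminus\overline{B_\epsilon}$; the strong maximum principle for $\mathcal{M}^{+}_{\lambda,\Lambda}$ gives $v>0$ in $\Omega$, and $u=\varphi_g^{-1}(v)$ is the desired positive classical solution of \hyperlink{Pmais}{$(\mathcal{P}^{+}_{g})$}.

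The hardest part will be establishing that $\deg(I-K_{t_0},B_R,0)=0$ for $t_0$ large, which rests on a nonexistence statement: testing the shifted equation against the positive eigenfunction of $\mathcal{M}^{+}_{\lambda,\Lambda}$ corresponding to $\mu^{+}_1$ and exploiting the strict inequality $\liminf_{s\to\infty} h(x,s)/s>\mu^{+}_1$ must contradict the existence of a nonnegative solution. The a priori bound \eqref{shift-solution} is \emph{not} used to obtain this nonexistence; it is precisely what makes the homotopy between $K$ and $K_{t_0}$ admissible, which is the reason Theorem \ref{thm2} is formulated conditionally on it.
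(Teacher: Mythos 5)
Your proposal is correct and follows essentially the same route as the paper: transform \hyperlink{Pmais}{$(\mathcal{P}^{+}_{g})$} via the Kazdan-Kramer change $v=\varphi_g(u)$ (the paper's Lemma~\ref{via-lemma}), check that $h$ inherits the superlinear hypotheses and that \eqref{shift-solution} supplies the required a priori bounds for the shifted family, and then invoke \cite[Theorem~1.2]{Sirakov} to produce $v$, hence $u=\varphi_g^{-1}(v)$. The Leray--Schauder degree argument you sketch is precisely the internal machinery of the cited Quaas--Sirakov theorem, which the paper treats as a black box rather than re-deriving.
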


For the choice $g\equiv 0$, Theorem~\ref{thm1} and Theorem~\ref{thm2} recover the existence results in \cite[Theorem~1.1 and Theorem~1.2]{Sirakov}.

\begin{remark}\label{ex-several} Let $a:\overline{\Omega}\to \mathbb{R}$ be  smooth and bounded between two  suitable constants. 
 \begin{enumerate}
     \item If $g\equiv 1$ and $f(x,t)=a(x)[e^{t}-1]^{p}e^{-t}$, then the par $f,g$ satisfies \hyperlink{fg0}{$(fg_0)$} and \hyperlink{fg_I}{$(fg_{I})$} for $0<p<1$, while it satisfies \hyperlink{fg0}{$(fg_0)$} and  \hyperlink{fg^I}{$(fg^{I})$} if $p>1$.
     \item If $g(t)=\dfrac{\mu}{1+t}$, with $\mu>0$ and $f(x,t)=\dfrac{a(x)}{(\mu+1)^p}[(1+t)^{\mu+1}-1]^p(1+t)^{-\mu}$, then the pair $f,g$ satisfies \hyperlink{fg0}{$(fg_0)$} and \hyperlink{fg_I}{$(fg_{I})$} for $0<p<1$,  while it satisfies \hyperlink{fg0}{$(fg_0)$} and  \hyperlink{fg^I}{$(fg^{I})$} if $p>1$. 
   \item If $g(t) = \dfrac{\sinh(t)}{\cosh(t) + 1}$ and $f(x,t)=\frac{a(x)}{1+\cosh{t}}\Big[(\sinh(t) + t)^{p}-\gamma(\sinh(t) + t)\Big]$ with $\gamma\ge 0$, then the pair \hyperlink{fg0}{$(fg_0)$} and \hyperlink{fg_I}{$(fg_{I})$} for $0<p<1$, while it satisfies \hyperlink{fg0}{$(fg_0)$} and  \hyperlink{fg^I}{$(fg^{I})$} if $p>1$. 
     \item If $g(t) = \dfrac{e^{t}(1 + t)}{t e^{t} + 1}$ and $f(x,t)=\frac{a(x)}{te^t+1}\Big[\mu\ln\big(2+t+ (t-1)e^t\big)-\gamma\big(1+t+(t-1)e^t\big)\Big]$ with $\mu>\mu^{+}_1+\gamma$, then the pair $f,g$ satisfies \hyperlink{fg0}{$(fg_0)$} and \hyperlink{fg_I}{$(fg_{I})$}. 
 \end{enumerate}   
\end{remark}
In Theorem~\ref{thm2},  in addition to assumptions \hyperlink{fg0}{$(fg_0)$} and  \hyperlink{fg^I}{$(fg^{I})$}, we also assume the existence of the  priori bounds  \eqref{shift-solution}. As an example of pair $f,g$ that admits \eqref{shift-solution} we can take
\begin{equation}\label{ex-regular}
    g(t) = \dfrac{1}{(t + e) \ln(t + e)}\;\;\mbox{and}\quad  f(x,t)=\frac{(t+e)^p}{\ln(t+e)}\Big(\ln(t+e)-1\Big)^p,
\end{equation}
 for $1<p<p^{s}_{+}$, where $p^{s}_{+}$ is defined in \eqref{p+critical} below. In fact, for this pair, the function $h$ in \eqref{f-transformed} becomes $h(x,s)=s^{p}$ for which we have the priori bounds \eqref{shift-solution} due to \cite[Proposition~4.3]{Sirakov}. For more results on the priori bounds for fully nonlinear equations in the context of Pucci’s extremal operators we recommend \cite{Sirakov,Cabre,Sirakov-Nornberg} and the references quoted therein.
\subsection{Liouville type theorems and radially symmetric solutions}
The aim of this section is to establish Liouville type results for the Pucci equations with a gradient type term  posed on $\mathbb {R}^n$, $n\ge 3$ and radially symmetric solutions on balls $B_R$, that is, 
\begin{equation}\nonumber
   \hypertarget{LP}{(\mathcal{L}^{\pm}_{g})}\;\;\;\;\;\;\quad \mathcal{M}^{\pm}_{\lambda,\Lambda}\big(D^2 u+g(u)\nabla u\otimes\nabla u\big)+f(u)=0,\;\;\; u\ge 0\;\; \mbox{in}\;\; \Omega,
\end{equation}
where $\Omega=B_{R}\subset\mathbb{R}^n$ is the ball of radius $0<R\le \infty$ and  the pair $f,g$ satisfies suitable hypotheses. First, we suppose that 
\begin{equation}\label{fg-L}
\Omega=\mathbb{R}^n\;\;\mbox{and}\;\; f(t)=\varphi^{p}_g(t) e^{-G(t)},\; t\ge 0,
\end{equation}
 where  $\varphi_g$ and $G$ are given by \eqref{a-KK}. The question is to determine the range of $p>1$ for which  \hyperlink{LP}{$(\mathcal{L}^{\pm}_{g})$} does not admit non-trivial solutions. Following \cite{Cutri,Felmer}, we define  the dimension-like numbers
\begin{equation}\nonumber
    \widetilde{N}_{+}=\frac{\lambda}{\Lambda}(n-1)+1\;\;\mbox{and}\;\;  \widetilde{N}_{-}=\frac{\Lambda}{\lambda}(n-1)+1
\end{equation}
and the exponents $(\widetilde{N}_{+}>2)$
\begin{equation}\label{p+critical}
\begin{aligned}
   p^{s}_{+}&=\frac{\widetilde{N}_{+}}{\widetilde{N}_{+}-2}\quad \mbox{and}\quad p^{p}_{+}=\frac{\widetilde{N}_{+}+2}{\widetilde{N}_{+}-2}\\
   p^{s}_{-}&=\frac{\widetilde{N}_{-}}{\widetilde{N}_{-}-2}\quad \mbox{and} \quad p^{o}_{-}=\frac{\widetilde{N}_{-}+2}{\widetilde{N}_{-}-2}.
\end{aligned}
\end{equation}
In addition, let us denote by
\begin{align*}
p^{*}_n=2^*-1=\frac{n+2}{n-2}
\end{align*}
the critical Sobolev exponent, see for instance  \cite{CGS}. With this notation, we have the first result.
\begin{theorem}\label{LPCutri} Suppose \eqref{fg-L} holds.  If $1<p\le p^{s}_{+}$, then \hyperlink{LP}{$(\mathcal{L}^{+}_{g})$} does not admit non-trivial solution. Similarly, if  $1<p\le p^{s}_{-}$, the equation  \hyperlink{LP}{$(\mathcal{L}^{-}_{g})$} has no non-trivial solution.
\end{theorem}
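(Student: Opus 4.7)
The plan is to reduce the problem to the classical Pucci Liouville theorem of Cutrì--Leoni by means of the Kazdan--Kramer type change $v=\varphi_g(u)$ introduced in \eqref{a-KK}. Concretely, suppose $u\ge 0$ is a classical solution of \hyperlink{LP}{$(\mathcal{L}^{\pm}_{g})$} with $f$ as in \eqref{fg-L}, and set $v=\varphi_g(u)$. Since $\varphi_g\in C^2$ is a diffeomorphism of $[0,\infty)$ onto itself with $\varphi_g(0)=0$, the function $v$ is nonnegative and of class $C^2$ on $\mathbb{R}^n$, and $v\equiv 0$ iff $u\equiv 0$.

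A direct chain-rule computation gives
\begin{equation*}
\nabla v = e^{G(u)}\nabla u,\qquad D^2 v = e^{G(u)}\bigl[D^2 u + g(u)\,\nabla u\otimes\nabla u\bigr].
\end{equation*}
Using the positive $1$-homogeneity of the Pucci operators, namely $\mathcal{M}^{\pm}_{\lambda,\Lambda}(tM)=t\,\mathcal{M}^{\pm}_{\lambda,\Lambda}(M)$ for every $t\ge 0$ and every symmetric matrix $M$, the equation \hyperlink{LP}{$(\mathcal{L}^{\pm}_{g})$} multiplied by $e^{G(u)}$ becomes
\begin{equation*}
\mathcal{M}^{\pm}_{\lambda,\Lambda}(D^2 v) + e^{G(u)}f(u) = 0.
\end{equation*}
The special form $f(t)=\varphi_g^{p}(t)e^{-G(t)}$ in \eqref{fg-L} now yields the clean identity $e^{G(u)}f(u)=\varphi_g^{p}(u)=v^{p}$, so that $v$ is a nonnegative classical solution of the pure Pucci equation
\begin{equation*}
\mathcal{M}^{\pm}_{\lambda,\Lambda}(D^2 v) + v^{p} = 0 \quad\text{in } \mathbb{R}^n.
\end{equation*}

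At this point I would invoke the Cutrì--Leoni Liouville theorem \cite{Cutri}: for $1<p\le p^{s}_{+}$ the only nonnegative classical solution of $\mathcal{M}^{+}_{\lambda,\Lambda}(D^2 v)+v^{p}=0$ in $\mathbb{R}^n$ is $v\equiv 0$, and analogously for the minus operator in the range $1<p\le p^{s}_{-}$. Since $\varphi_g$ is a bijection, $v\equiv 0$ forces $u\equiv 0$, completing the proof.

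There is no real obstacle here beyond checking the transformation: the computation relies essentially on the $1$-homogeneity of $\mathcal{M}^{\pm}_{\lambda,\Lambda}$ together with the precise choice of $f$ in \eqref{fg-L}, which is tailored exactly so that $e^{G}f$ becomes a pure power of $\varphi_g$. One should only verify that the change of variables preserves the classical (and, if needed, $L^{n}$-viscosity) regularity of the solution so that Cutrì--Leoni's statement applies verbatim; this is immediate from $\varphi_g\in C^2$ and $e^{G(u)}>0$.
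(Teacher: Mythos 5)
Your proposal is correct and follows essentially the same route as the paper: both pass to $v=\varphi_g(u)$, observe that the $1$-homogeneity of $\mathcal M^\pm_{\lambda,\Lambda}$ turns $(\mathcal L^\pm_g)$ into the pure power equation $\mathcal M^\pm_{\lambda,\Lambda}(D^2v)+v^p=0$ because $e^{G}f=\varphi_g^p$, and then quote the Cutr\`i--Leoni Liouville theorem. The only cosmetic difference is that the paper packages the change-of-variables computation into Lemmas~\ref{lemma1}, \ref{via-lemma} and \ref{Autovia-lemma} and explicitly remarks that a classical solution is in particular a viscosity solution (so the viscosity-level nonexistence applies), whereas you carry out the chain-rule calculation inline and note the regularity issue more briefly.
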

Inspired by Felmer and Quaas \cite{Felmer}, in the next we are interested to  inquire about  \hyperlink{LP}{$(\mathcal{L}^{\pm}_{g})$} in the case of radially symmetric solutions.
We start with the following classification for a possible radial solutions $u=u(r)$ of Pucci equation  \hyperlink{LP}{$(\mathcal{L}^{\pm}_{g})$} with $\Omega=\mathbb{R}^n$.
\begin{enumerate}
    \item [$(i)$] $u$ is a $g$-pseudo-slow decaying solution if there exist constants $0<C_1<C_2$ such that
    \begin{equation}\nonumber
        C_1=\liminf_{r\to\infty}r^{\alpha}\varphi_{g}(u(r))<\limsup_{r\to\infty}r^{\alpha}\varphi_{g}(u(r))=C_2
    \end{equation}
    \item [$(ii)$] $u$ is a $g$-slow decaying solution if there exists $c_*>0$ such that
    \begin{equation}\nonumber
        \lim_{r\to\infty}r^{\alpha}\varphi_{g}(u(r))=c_{*}
    \end{equation}
    \item [$(iii)$]\,   $u$ is a $g$-fast decaying solution if there exists $C>0$ such that
    \begin{equation}\nonumber
        \lim_{r\to\infty}r^{\widetilde{N}-2}\varphi_{g}(u(r))=C
    \end{equation}
\end{enumerate}
where $\varphi_g$ is given by \eqref{a-KK}, $\alpha=2/(p-1)$ and $\widetilde{N}=\widetilde{N}_{+}$ or $\widetilde{N}=\widetilde{N}_{-}$, according if  \hyperlink{LP}{$(\mathcal{L}^{+}_{g})$} or  \hyperlink{LP}{$(\mathcal{L}^{-}_{g})$} is considered.

\begin{remark}\label{0-slowXg-slow} If $g\equiv 0$,  we have $\varphi_g(s)=s$ and thus the classification $(i)$, $(ii)$ and $(iii)$ coincides with that proposed by Felmer and Quaas \cite[Definition~1.1]{Felmer}. Accordingly, we will use  the expressions pseudo-slow, slow  and fast instead of $0$-pseudo-slow, $0$-slow  and $0$-fast. 
\end{remark}
\begin{theorem}\label{LP+Felmer}  Suppose that  \eqref{fg-L} and $\widetilde{N}_{+}>2$ hold. Then, there exists a critical exponent $p^{*}_{+}$ such that $\max\{p^{s}_{+}, p^{*}_{n}\}<p^{*}_{+}<p^{p}_{+}$  satisfying:  
\begin{enumerate}
    \item [$(i)$] If $1<p<p^{*}_{+}$, then \hyperlink{LP}{$(\mathcal{L}^{+}_{g})$} does not admit non-trivial radial solutions.
    \item [$(ii)$] If $p=p^{*}_{+}$, then \hyperlink{LP}{$(\mathcal{L}^{+}_{g})$} has a unique $g$-fast decaying radial solution.
    \item [$(iii)$]\, If $p^{*}_{+}<p\le p^{p}_{+}$, then \hyperlink{LP}{$(\mathcal{L}^{+}_{g})$} has a unique $g$-pseudo-slow decaying radial solution.
    \item [$(iv)$]\, If $p>p^{p}_{+}$, then \hyperlink{LP}{$(\mathcal{L}^{+}_{g})$} has a unique $g$-slow decaying radial solution.
\end{enumerate}
In $(ii)$, $(iii)$ and $(iv)$  the uniqueness is understood up to scaling.
\end{theorem}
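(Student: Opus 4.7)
The plan is to reduce Theorem~\ref{LP+Felmer} to the radial classification theorem of Felmer--Quaas \cite{Felmer} for the pure equation
\[
\mathcal{M}^{+}_{\lambda,\Lambda}(D^2 v) + v^p = 0 \quad \text{in } \mathbb{R}^n,
\]
by means of the Kazdan--Kramer type change $v = \varphi_g(u)$ introduced in \eqref{a-KK}, and then to translate the decay classification back via $\varphi_g^{-1}$.

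First I would compute how the transformation acts on the operator. Since $\varphi_g'(s)=e^{G(s)}$ and $\varphi_g''(s)=g(s)e^{G(s)}$, setting $v=\varphi_g(u)$ gives
\[
D^2 v \;=\; e^{G(u)}\bigl(D^2 u + g(u)\,\nabla u \otimes \nabla u\bigr),
\]
so, using the positive $1$-homogeneity of $\mathcal{M}^{+}_{\lambda,\Lambda}$,
\[
\mathcal{M}^{+}_{\lambda,\Lambda}\bigl(D^2 u+g(u)\nabla u\otimes\nabla u\bigr) \;=\; e^{-G(u)}\,\mathcal{M}^{+}_{\lambda,\Lambda}(D^2 v).
\]
Inserting the particular nonlinearity $f(t)=\varphi_g^{p}(t)e^{-G(t)}$ from \eqref{fg-L} and multiplying by $e^{G(u)}$ shows that \hyperlink{LP}{$(\mathcal{L}^{+}_{g})$} is equivalent to $\mathcal{M}^{+}_{\lambda,\Lambda}(D^2 v)+v^{p}=0$ with $v\ge 0$. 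I would verify that this equivalence is bijective on the level of nonnegative $C^2$ radial solutions: since $\varphi_g$ is a $C^2$ diffeomorphism of $[0,\infty)$ with $\varphi_g(0)=0$, the map $u\mapsto\varphi_g(u)$ takes nonnegative radial classical solutions to nonnegative radial classical solutions and vice versa.

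Next I would match the decay classes. Because $v(r)=\varphi_g(u(r))$, the three asymptotic definitions $(i)$--$(iii)$ at the end of the excerpt are literally rewritings of the pseudo-slow, slow and fast decay definitions of Felmer--Quaas applied to $v$:
\[
r^{\alpha}\varphi_g(u(r)) = r^{\alpha}v(r), \qquad r^{\widetilde{N}_+ - 2}\varphi_g(u(r)) = r^{\widetilde{N}_+ - 2}v(r).
\]
Hence the dichotomy/classification for $v$ pulls back to the $g$-dichotomy/classification for $u$. Applying \cite[Theorem~1.3]{Felmer} to $v$, with the same exponent $p^{*}_{+}$ (which satisfies $\max\{p^{s}_{+},p^{*}_{n}\}<p^{*}_{+}<p^{p}_{+}$ by their construction, using $\widetilde{N}_+>2$), gives statements $(i)$--$(iv)$ verbatim for $u$. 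Uniqueness up to scaling in $(ii)$--$(iv)$ transfers as well: the scaling family $v_\mu(r)=\mu^{2/(p-1)}v(\mu r)$ of solutions to the pure equation corresponds, via $u=\varphi_g^{-1}(v)$, to a one-parameter family of $g$-decaying solutions to \hyperlink{LP}{$(\mathcal{L}^{+}_{g})$}, and the result of Felmer--Quaas says there is only one such family.

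The main technical point, and the step I expect to require the most care, is the bijection at the level of solutions when $u$ is merely a viscosity solution in the Pucci sense. In the radial setting, however, $u$ is $C^2$ away from the origin and $\varphi_g\in C^2$ with $\varphi_g'>0$, so the composition remains a classical radial solution on $(0,\infty)$; at $r=0$ one checks that $v'(0)=e^{G(u(0))}u'(0)=0$ and $v$ is $C^2$ up to the origin, which makes the equivalence rigorous in the framework of \cite{Felmer}. Once this is settled, the rest of the argument is purely algebraic, and the theorem follows directly from the radial classification of Felmer--Quaas for the model power equation.
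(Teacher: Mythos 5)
Your proposal is correct and takes essentially the same route as the paper: reduce $(\mathcal{L}^{+}_{g})$ to $\mathcal{M}^{+}_{\lambda,\Lambda}(D^2v)+v^p=0$ via the Kazdan--Kramer change $v=\varphi_g(u)$, invoke the Felmer--Quaas radial classification, and pull the decay classes back through $\varphi_g^{-1}$ using $r^{\alpha}\varphi_g(u(r))=r^{\alpha}v(r)$. The paper simply packages these steps as Lemma~\ref{Autovia-lemma} and then quotes \cite[Theorem~1.1]{Felmer} (your citation ``Theorem~1.3'' of \cite{Felmer} is a slip, but the content you use is the same).
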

An analogous result to \hyperlink{LP}{$(\mathcal{L}^{-}_{g})$} can be established,  which we include below for completeness.
\begin{theorem}\label{LP-Felmer}  Suppose that  \eqref{fg-L}  holds. Then, there exists a critical exponent $p^{*}_{-}$ such that $1<p^{s}_{-}<p^{*}_{-}<p^{p}_{-}:=p^{*}_{n}$ and $p^{o}_{-}<p^{*}_{-}<p^{p}_{-}$ satisfying:  
\begin{enumerate}
    \item [$(i)$] If $1<p<p^{*}_{-}$, then \hyperlink{LP}{$(\mathcal{L}^{-}_{g})$} does not admit non-trivial radial solutions.
    \item [$(ii)$] If $p=p^{*}_{-}$, then \hyperlink{LP}{$(\mathcal{L}^{-}_{g})$} has a unique $g$-fast decaying radial solution.
    \item [$(iii)$]\, If $p^{*}_{-}<p\le p^{p}_{-}$, then \hyperlink{LP}{$(\mathcal{L}^{-}_{g})$} has a unique radial solution, which is a $g$-slow decaying or a $g$-pseudo-slow decaying solution. 
    \item [$(iv)$]\, If $p>p^{p}_{-}$, then \hyperlink{LP}{$(\mathcal{L}^{-}_{g})$} has a unique $g$-slow decaying radial solution.
\end{enumerate}
In $(ii)$, $(iii)$ and $(iv)$  the uniqueness is understood up to scaling.
\end{theorem}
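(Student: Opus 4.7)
The plan is to reduce $(\mathcal{L}^-_g)$ to the pure-power Pucci equation $\mathcal{M}^-_{\lambda,\Lambda}(D^2 v)+v^p=0$ via the Kazdan--Kramer change $v=\varphi_g(u)$, and then transport the radial classification of Felmer--Quaas \cite{Felmer} for the operator $\mathcal{M}^-_{\lambda,\Lambda}$. This is the same mechanism that should power the $\mathcal{M}^+$ case in Theorem~\ref{LP+Felmer}, and both uniqueness and the identification of the critical exponent $p^*_-$ will come for free from \cite{Felmer} once the transformation is in place.

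First, I would carry out the change of variables. Since $\varphi_g$ is a $C^2$ diffeomorphism of $[0,\infty)$ with $\varphi_g(0)=0$, the map $u\mapsto v:=\varphi_g(u)$ is a bijection between nonnegative $C^2$ radial solutions of $(\mathcal{L}^-_g)$ and those of the image equation. A direct computation gives $\nabla v=e^{G(u)}\nabla u$ and
$$D^2 v=e^{G(u)}\bigl(D^2 u+g(u)\,\nabla u\otimes\nabla u\bigr),$$
and the positive $1$-homogeneity of $\mathcal{M}^-_{\lambda,\Lambda}$ then yields $\mathcal{M}^-_{\lambda,\Lambda}(D^2 v)=e^{G(u)}\mathcal{M}^-_{\lambda,\Lambda}(D^2 u+g(u)\nabla u\otimes\nabla u)$. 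With the choice $f(t)=\varphi^p_g(t)e^{-G(t)}$ imposed in \eqref{fg-L}, multiplying $(\mathcal{L}^-_g)$ by $e^{G(u)}$ yields the reduced problem
$$\mathcal{M}^-_{\lambda,\Lambda}(D^2 v)+v^p=0,\quad v\ge 0\ \text{in}\ \mathbb{R}^n.$$

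Next, I would translate the decay dichotomy. Because $v(r)=\varphi_g(u(r))$, the definitions $(i)$--$(iii)$ of $g$-pseudo-slow, $g$-slow, and $g$-fast decaying radial solutions of $(\mathcal{L}^-_g)$ correspond bijectively to pseudo-slow, slow, and fast decaying radial solutions of the transformed equation in the sense of \cite[Definition~1.1]{Felmer} (cf.\ Remark~\ref{0-slowXg-slow}), using the same exponents $\alpha=2/(p-1)$ and $\widetilde{N}_--2$. The critical exponent $p^*_-$ and the orderings $1<p^s_-<p^*_-<p^p_-$ and $p^o_-<p^*_-<p^p_-$ are inherited verbatim from the classical radial classification in \cite{Felmer} for the operator $\mathcal{M}^-_{\lambda,\Lambda}$. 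Each of the four conclusions $(i)$--$(iv)$ is then the direct translation of the corresponding case in that reference. Uniqueness up to scaling is also preserved, since the natural scaling $v_\mu(r)=\mu^{2/(p-1)}v(\mu r)$ of the pure-power problem pulls back through $\varphi_g^{-1}$ to a well-defined scaling on the $u$-side.

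The main obstacle I anticipate is the rigorous verification that the correspondence $u\leftrightarrow v$ preserves the classical sense of the equation globally, and in particular at the origin of a radial solution. One needs to check that $u\in C^2([0,\infty))$ with $u'(0)=0$ produces $v\in C^2([0,\infty))$ with $v'(0)=0$ solving $\mathcal{M}^-_{\lambda,\Lambda}(D^2 v)+v^p=0$ classically (the radial operator for $\mathcal{M}^-$ is only piecewise smooth in $(v',v'')$, so the chain-rule identities must be checked on each sign region of the eigenvalues of $D^2 v$), and conversely that a solution $v\ge 0$ to the reduced problem yields $u=\varphi_g^{-1}(v)\in C^2$ satisfying $(\mathcal{L}^-_g)$. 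Once this regularity bookkeeping is done, no new ODE/shooting analysis is needed and Theorem~\ref{LP-Felmer} follows by direct importation of the $\mathcal{M}^-$ results in \cite{Felmer}.
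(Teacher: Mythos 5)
Your proposal is correct and follows essentially the same route as the paper: transform $(\mathcal{L}^-_g)$ to the pure-power equation $\mathcal{M}^-_{\lambda,\Lambda}(D^2 v)+v^p=0$ via the Kazdan--Kramer change $v=\varphi_g(u)$ (Lemmas~\ref{lemma1} and \ref{via-lemma}), translate the decay classification through $r^\alpha v(r)=r^\alpha\varphi_g(u(r))$ (Lemma~\ref{Autovia-lemma}), and import the radial classification of Felmer--Quaas \cite[Theorem~1.2]{Felmer}. The regularity concern you raise about the radial operator being only piecewise smooth is not actually an obstacle, since the identity \eqref{comp-Hessian} is a pointwise matrix identity and the Pucci operators are then applied at the Hessian level, bypassing any sign-region bookkeeping.
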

We note that Theorem~\ref{LP+Felmer} and Theorem~\ref{LP-Felmer} extend the analysis in \cite[Theorem~1.1 and Theorem~1.2]{Felmer} for $g\not\equiv 0$.
\begin{remark} For $g(t) = \dfrac{2t}{1 + t^2}$, we obtain that $f(t) = \dfrac{1}{1+t^ 2}\left(t + \dfrac{t^3}{3}\right)^{p}$ is an example for Theorems \ref{LPCutri}, \ref{LP+Felmer} and \ref{LP-Felmer}. 
\end{remark}
Next, we are supposing  the domain $\Omega$ and the pair $f,g$ satisfy
\begin{equation}\label{fg-BR}
 \Omega=B_R \;\;\mbox{and}\;\; f(t)=-\gamma \varphi_{g}(t)e^{-G(t)}+\psi(t), \; t\ge 0,
\end{equation}
where  $\varphi_g$ and $G$ are given by \eqref{a-KK} and $\psi:[0,\infty)\to [0,\infty)$
satisfies
\begin{enumerate}
    \item [\hypertarget{psi0}{$(\psi_0)\;$}] $\psi\in C([0,\infty))$  and $t\mapsto e^{G(\varphi^{-1}_g(t))}\psi(\varphi_g^{-1}(t))$ is locally Lipschitz on $[0,\infty)$
    \item [\hypertarget{psi1}{$(\psi_1)\;$}]  there exists $1<p<p^{*}_{\pm}$ such that
    \begin{align*}
    \lim_{t\to\infty}\frac{e^{G(t)}\psi(t)}{\varphi_{g}^p(t)}=C^*,\;\;\mbox{for some constant}\;\; C^*>0
    \end{align*}
    \item [\hypertarget{psi2}{$(\psi_2)\;$}]  there exists $c^*\ge0$ such that $c^*-\gamma<\mu^{\pm}_{1}$ and 
     \begin{align*}
    \lim_{t\to0}\frac{e^{G(t)}\psi(t)}{\varphi_{g}(t)}=c^*
    \end{align*}
    where $\mu^{\pm}_{1}$ represents the first eigenvalue  for $\mathcal{M}^{\pm}_{\lambda,\Lambda}$ in $B_R$.
\end{enumerate}
\begin{remark} 
The function $f(t)=(-\gamma+\mu +\sinh^{p-1} t)\tanh t$ with $\gamma\ge 0$, $1<p<p^*_{\pm}$ and $0\le \mu<\mu^{\pm}_1$ satisfies  \hyperlink{psi0}{$(\psi_0)$}-\hyperlink{psi0}{$(\psi_2)$} with $g(t)=\tanh t$ and $\psi(t)=\mu\tanh t+\tanh t \sinh^{p-1} t$.
\end{remark}
\begin{theorem}\label{thm-radial1} Assume $B_R$  is a finite ball and \eqref{fg-BR} with $\psi$ satisfying \hyperlink{psi0}{$(\psi_0)$}-\hyperlink{psi0}{$(\psi_2)$}. Then there exists a positive radially symmetric $u\in C^2$ of  \hyperlink{LP}{$(\mathcal{L}^{\pm}_{g})$}  with $u=0$ on $\partial B_R$.  
\end{theorem}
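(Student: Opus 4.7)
The plan is to apply the Kazdan–Kramer type change $v=\varphi_g(u)$ to reduce $(\mathcal{L}^{\pm}_g)$, in the ball case, to a pure Pucci equation without the gradient term, and then invoke the radial existence theorem of Felmer–Quaas–Tang \cite{FelmerTang}.

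First, I would compute the effect of the change of variable. Since $\varphi_g'(s)=e^{G(s)}$ and $\varphi_g''(s)=g(s)e^{G(s)}$, a direct calculation yields $\nabla v=e^{G(u)}\nabla u$ and $D^2v=e^{G(u)}\bigl(D^2u+g(u)\nabla u\otimes\nabla u\bigr)$. By the positive $1$-homogeneity of $\mathcal{M}^{\pm}_{\lambda,\Lambda}$,
$$\mathcal{M}^{\pm}_{\lambda,\Lambda}\bigl(D^2u+g(u)\nabla u\otimes\nabla u\bigr)=e^{-G(u)}\,\mathcal{M}^{\pm}_{\lambda,\Lambda}(D^2v).$$
Substituting into $(\mathcal{L}^{\pm}_g)$ and using the explicit form of $f$ from \eqref{fg-BR}, the problem becomes equivalent to
$$\mathcal{M}^{\pm}_{\lambda,\Lambda}(D^2v)-\gamma v+\Psi(v)=0\quad\text{in }B_R,\qquad v=0\quad\text{on }\partial B_R,$$
where $\Psi(v):=e^{G(\varphi^{-1}_g(v))}\psi(\varphi^{-1}_g(v))$, which is exactly a Pucci equation of pure type in the ball.

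Second, I would verify that the transformed nonlinearity $\widetilde f(v):=-\gamma v+\Psi(v)$ fits the framework of \cite{FelmerTang}. Assumption $(\psi_0)$ directly yields local Lipschitz regularity of $\Psi$. Since $\varphi^{-1}_g(s)/s\to 1$ as $s\to 0$, $(\psi_2)$ translates to $\lim_{v\to 0}\Psi(v)/v=c^*$, so that $\widetilde f$ is asymptotically linear at the origin with slope $c^*-\gamma$ lying strictly below the first Pucci eigenvalue $\mu^{\pm}_1$ of $B_R$. Assumption $(\psi_1)$ gives $\lim_{v\to\infty}\Psi(v)/v^p=C^*>0$ with $1<p<p^{*}_{\pm}$, so $\widetilde f$ is subcritical and superlinear at infinity. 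Applying the Felmer–Quaas–Tang radial existence theorem then produces a positive radial $C^2$ solution $v$ of the transformed problem with $v=0$ on $\partial B_R$, and pulling back via $u:=\varphi^{-1}_g(v)$ gives, since $\varphi_g$ is a $C^2$ diffeomorphism of $[0,\infty)$ fixing the origin, a positive radial $C^2$ solution of $(\mathcal{L}^{\pm}_g)$ vanishing on $\partial B_R$.

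The main obstacle will be the matching step: one must identify the critical exponent $p^{*}_{\pm}$ appearing in $(\psi_1)$ with the critical exponent governing radial existence for $\widetilde f$ in \cite{FelmerTang}, and check that the presence of the linear shift $-\gamma v$, together with the strict inequality $c^*-\gamma<\mu^{\pm}_1$, keeps the problem within the scope of that result. In particular, one should confirm that the associated radial ODE in $B_R$ admits a branch of positive solutions bifurcating from zero at the correct eigenvalue level and that the subcritical superlinear behaviour at infinity prevents blow-up, so that a shooting (or topological) argument effectively selects a solution that vanishes exactly at $r=R$.
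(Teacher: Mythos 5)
Your overall strategy --- transform via $v=\varphi_g(u)$ as in Lemma~\ref{lemma1}, verify that the transformed nonlinearity $-\gamma v+\Psi(v)$ satisfies the hypotheses of a known radial existence theorem for Pucci equations on a ball, and pull back via $u=\varphi_g^{-1}(v)$ --- is exactly the paper's, and your translation of $(\psi_1)$, $(\psi_2)$ into limits of $\Psi$ at $\infty$ and $0$ matches the computation in the paper.

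However, you invoke the wrong existence theorem. Felmer--Quaas--Tang \cite{FelmerTang} is a \emph{uniqueness} result tailored to the prototype $h(s)=-s+s^p$; it does not supply a radial existence statement for a general $\psi$ subject only to $(\psi_0)$--$(\psi_2)$, and the paper cites it only in the proof of Theorem~\ref{thm-radial2}. The ingredient actually needed here is \cite[Theorem~1.2]{FelmerQL} (Felmer--Quaas, \emph{J.\ Differential Equations} \textbf{199} (2004)), which yields a positive radially symmetric $C^2$ solution in $B_R$ of $\mathcal{M}^{\pm}_{\lambda,\Lambda}(D^2v)-\gamma v+\overline{h}(v)=0$ under precisely the conditions $(\overline{h}_0)$--$(\overline{h}_2)$ that your $\Psi$ is shown to satisfy. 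Once you replace the reference, the ``main obstacle'' you flag evaporates: the linear shift $-\gamma v$, the threshold $c^*-\gamma<\mu_1^{\pm}$, and the subcritical exponent $p<p^{*}_{\pm}$ are built into the hypotheses of that theorem, so no additional shooting or bifurcation argument needs to be reconstructed from scratch.
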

By choosing a prototype $\psi(t)=\varphi^{p}_g(t)e^{-G(t)}$ and $\gamma=1$, we are also able to prove the following uniqueness result.
\begin{theorem}\label{thm-radial2} Assume $f(t)=[-\varphi_g(t)+\varphi^{p}_{g}(t)]e^{-G(t)}$ with $1<p<p^{*}_{\pm}$. Then,
\begin{enumerate}
\item If $\Omega=B_R$ is a finite ball,   \hyperlink{LP}{$(\mathcal{L}^{\pm}_{g})$} admits exactly one positive radial solution with $u=0$ on $\partial B_R$
\item If $\Omega=\mathbb{R}^n$,  \hyperlink{LP}{$(\mathcal{L}^{\pm}_{g})$} admits exactly one positive radial solution with $\lim u(|x|)\to 0$ as $|x|\to \infty$.
\end{enumerate}  
\end{theorem}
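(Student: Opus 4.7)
The strategy is to reduce both equations to a single Pucci model problem with a pure subcritical reaction by means of the Kazdan--Kramer change $v=\varphi_g(u)$, and then invoke the known uniqueness theory for that reduced problem.

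First I carry out the substitution. Since $\varphi'_g(t)=e^{G(t)}>0$ and $\varphi''_g(t)=g(t)e^{G(t)}$, a direct computation gives $\nabla v=e^{G(u)}\nabla u$ and
\[
D^2v=e^{G(u)}\bigl(D^2u+g(u)\nabla u\otimes\nabla u\bigr).
\]
The positive $1$-homogeneity of $\mathcal{M}^{\pm}_{\lambda,\Lambda}$ then yields
\[
\mathcal{M}^{\pm}_{\lambda,\Lambda}(D^2v)=e^{G(u)}\,\mathcal{M}^{\pm}_{\lambda,\Lambda}\bigl(D^2u+g(u)\nabla u\otimes\nabla u\bigr).
\]
With $f(t)=[-\varphi_g(t)+\varphi_g^{p}(t)]e^{-G(t)}$ one has $e^{G(u)}f(u)=-v+v^{p}$, so $(\mathcal{L}^\pm_g)$ is equivalent to
\[
\mathcal{M}^{\pm}_{\lambda,\Lambda}(D^2v)-v+v^{p}=0,\qquad v\ge 0,
\]
under Dirichlet data $v=0$ on $\partial B_R$ in case (1) and under $v(|x|)\to 0$ as $|x|\to\infty$ in case (2). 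Because $\varphi_g$ is a $C^2$ diffeomorphism of $[0,\infty)$ onto itself, $u\mapsto\varphi_g(u)$ gives a bijection between positive classical radial solutions of $(\mathcal{L}^\pm_g)$ and positive classical radial solutions of the reduced problem, preserving boundary values, monotonicity and decay.

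For case (1), existence of a positive radial $v\in C^2(\overline{B_R})$ follows by applying Theorem~\ref{thm-radial1} to the reduced equation, which corresponds to the original choice $\gamma=1$ and $\psi(t)=\varphi_g^{p}(t)e^{-G(t)}$ in \eqref{fg-BR}: the function $s\mapsto e^{G(\varphi_g^{-1}(s))}\psi(\varphi_g^{-1}(s))=s^{p}$ is locally Lipschitz on $[0,\infty)$, so $(\psi_0)$ holds; $(\psi_1)$ holds with $C^{*}=1$ and the exponent $1<p<p^{*}_{\pm}$; and $(\psi_2)$ holds with $c^{*}=0$, giving $c^{*}-\gamma=-1<\mu^{\pm}_{1}$. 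Uniqueness of the positive radial solution of the Dirichlet problem for $\mathcal{M}^{\pm}_{\lambda,\Lambda}(D^2v)-v+v^{p}=0$ in $B_R$ in the subcritical range $1<p<p^{*}_{\pm}$ is the content of the uniqueness theorems of Felmer--Quaas--Tang \cite{FelmerTang}, which I invoke directly; pulling back through $u=\varphi_g^{-1}(v)$ concludes this case.

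For case (2), after the reduction I seek a positive radial ground state of $\mathcal{M}^{\pm}_{\lambda,\Lambda}(D^2v)-v+v^{p}=0$ on $\mathbb{R}^n$. Existence for $1<p<p^{*}_{\pm}$ can be produced by the shooting method used in the proof of Theorem~\ref{LP+Felmer} and Theorem~\ref{LP-Felmer}, now adapted to incorporate the linear absorption $-v$ (which, in contrast with the pure power case, pins down the scale of any ground state). Uniqueness of the positive decaying radial solution in the subcritical range follows again from \cite{FelmerTang}; the pull-back $u=\varphi_g^{-1}(v)$ then produces the unique positive radial solution of $(\mathcal{L}^\pm_g)$ with $u(|x|)\to 0$. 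The principal obstacle is the bookkeeping that ensures the Kazdan--Kramer change maps the admissible class of $(\mathcal{L}^\pm_g)$ bijectively onto that of the reduced equation while preserving Dirichlet data on $\partial B_R$ and the decay at infinity; once this is in place, the substance of the theorem rests on Theorem~\ref{thm-radial1} and the uniqueness results of \cite{FelmerTang} for the model Pucci equation with reaction $v^{p}-v$.
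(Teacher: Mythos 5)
Your proposal is correct and follows essentially the same approach as the paper: apply the Kazdan--Kramer change $v=\varphi_g(u)$ (Lemma~\ref{via-lemma}) to reduce $(\mathcal{L}^{\pm}_{g})$ to $\mathcal{M}^{\pm}_{\lambda,\Lambda}(D^2v)-v+v^p=0$, then invoke Felmer--Quaas--Tang. The only difference is that you supply separate existence arguments (Theorem~\ref{thm-radial1} for $B_R$, a sketched shooting argument on $\mathbb{R}^n$), whereas the paper obtains both existence and uniqueness in both domains directly from \cite[Theorem~1.1]{FelmerTang}, making those extra steps unnecessary.
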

Theorem~\ref{thm-radial2}  extends for the Pucci equations with a gradient type term, the  uniqueness result in  \cite[Theorem~1.1]{FelmerTang}.
\begin{remark} For $g(t)=\dfrac{e^t}{1+e^t}$, we get that $ f(t)=\dfrac{2}{1+e^t}\Big[\Big(\dfrac{e^{t} + t-1}{2}\Big)^p-\Big(\dfrac{e^{t} + t-1}{2}\Big)\Big]$ with $1<p<p^{*}_{\pm}$
is a typical example for Theorem~\ref{thm-radial2}.
\end{remark}

\section{Kazdan-Kramer change}
Let $M$ and $N$ be two symmetric matrices. The following basic properties of $\mathcal{M}^{\pm}=\mathcal{M}^{\pm}_{\lambda,\Lambda}$ can be founded in \cite[Lemma~2.10]{Cabre}.
\begin{enumerate}

    \item [\hypertarget{1P}{$(1)$}] $\mathcal{M}^{\pm}(\alpha M)=\alpha\mathcal{M}^{\pm}(M)$ if $\alpha\ge 0$
    \item [\hypertarget{2P}{$(2)$}] $\mathcal{M}^{+}(M)+\mathcal{M}^{-}(N)\le \mathcal{M}^{+}(M+N)\le \mathcal{M}^{+}(M)+\mathcal{M}^{+}(N)$
    \item [\hypertarget{3P}{$(3)$}] $\mathcal{M}^{-}(M)+\mathcal{M}^{-}(N)\le \mathcal{M}^{-}(M+N)\le \mathcal{M}^{+}(M)+\mathcal{M}^{-}(N)$.
\end{enumerate}

In the next two results $\Omega$ represents a general open set in $\mathbb{R}^n$.
   \begin{lemma} \label{lemma1} Let  $u\in C^2(\Omega)$ and $I\subset \mathbb{R}$ be an interval with $u(\Omega)\subset I$ and $\varphi\in C^{2}(I)$ with  $\varphi^{\prime}> 0$, $\varphi^{\prime\prime}\ge 0$ and set  $v=\varphi(u)$. Then
   \begin{equation}\label{=M+}
    (\varphi^{-1})^{\prime}(v)\mathcal{M}^{\pm}( D^{2}v) = \mathcal{M}^{\pm}\Big(D^2u+\frac{\varphi^{\prime\prime}(u)}{\varphi^\prime(u)}\nabla u\otimes\nabla u\Big).
    \end{equation}
In addition, 
\begin{equation}\label{M+}
   \mathcal{M}^{+}( D^2u)+\frac{\varphi^{\prime\prime}(u)}{\varphi^\prime(u)}\mathcal{M}^{-}(\nabla u\otimes\nabla u)   \le (\varphi^{-1})^{\prime}(v)\mathcal{M}^{+}( D^{2}v) \le   \mathcal{M}^{+}( D^2u)+\frac{\varphi^{\prime\prime}(u)}{\varphi^\prime(u)}\mathcal{M}^{+}(\nabla u\otimes\nabla u)
    \end{equation}
    and
    \begin{equation}\label{M-}
         \mathcal{M}^{-}( D^2u)+\frac{\varphi^{\prime\prime}(u)}{\varphi^\prime(u)}\mathcal{M}^{-}(\nabla u\otimes\nabla u)   \le  (\varphi^{-1})^{\prime}(v)\mathcal{M}^{-}( D^{2}v)\! \le  \! \mathcal{M}^{-}( D^2u)+\frac{\varphi^{\prime\prime}(u)}{\varphi^\prime(u)}\mathcal{M}^{+}(\nabla u\otimes\nabla u).
    \end{equation}
\end{lemma}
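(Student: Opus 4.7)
The plan is to treat this as a direct chain-rule computation followed by a clean application of the three structural properties \hyperlink{1P}{$(1)$}--\hyperlink{3P}{$(3)$} of the Pucci operators listed just above the lemma. The central identity that does all the work is the Hessian transformation rule
\begin{equation*}
D^{2}v \;=\; \varphi'(u)\,D^{2}u \;+\; \varphi''(u)\,\nabla u\otimes\nabla u,
\end{equation*}
which I would verify by computing $\partial_i v = \varphi'(u)\partial_i u$ and differentiating once more. Since $\varphi'(u)>0$, I can rewrite this as
\begin{equation*}
D^{2}v \;=\; \varphi'(u)\left[D^{2}u+\tfrac{\varphi''(u)}{\varphi'(u)}\nabla u\otimes\nabla u\right].
\end{equation*}

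From here, identity \eqref{=M+} is immediate. I apply positive homogeneity \hyperlink{1P}{$(1)$} with $\alpha=\varphi'(u)>0$ to pull the scalar out of $\mathcal{M}^{\pm}$, then divide by $\varphi'(u)$. Using $(\varphi^{-1})'(v)=1/\varphi'(u)$ gives the equality in \eqref{=M+} in one line for both the $+$ and $-$ cases simultaneously.

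For the inequalities \eqref{M+} and \eqref{M-}, the strategy is to split the argument of $\mathcal{M}^{\pm}$ as a sum $M+N$, with $M=D^{2}u$ and $N=\tfrac{\varphi''(u)}{\varphi'(u)}\nabla u\otimes \nabla u$, and invoke the sub/super-additivity bounds \hyperlink{2P}{$(2)$} and \hyperlink{3P}{$(3)$}. Because $\varphi''\ge 0$ and $\varphi'>0$, the coefficient $\tfrac{\varphi''(u)}{\varphi'(u)}$ is nonnegative, so property \hyperlink{1P}{$(1)$} applies once more and allows me to factor it out of $\mathcal{M}^{\pm}(N)$. Multiplying the two-sided estimate coming from \hyperlink{2P}{$(2)$} (respectively \hyperlink{3P}{$(3)$}) through by $(\varphi^{-1})'(v)=1/\varphi'(u)>0$ preserves the direction of all inequalities and, after combining with \eqref{=M+}, yields exactly \eqref{M+} (respectively \eqref{M-}).

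There is no real obstacle here; the only point requiring a small amount of care is bookkeeping the sign of the scalar coefficient $\tfrac{\varphi''}{\varphi'}$ so that \hyperlink{1P}{$(1)$} can be applied to $\mathcal{M}^{\pm}(N)$ — this is precisely where the hypotheses $\varphi'>0$ and $\varphi''\ge 0$ are used. All other steps are algebraic manipulations of the Pucci inequalities combined with the chain rule.
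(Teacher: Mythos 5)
Your argument is correct and matches the paper's proof essentially step for step: compute $D^2v=\varphi'(u)D^2u+\varphi''(u)\nabla u\otimes\nabla u$, apply positive homogeneity \hyperlink{1P}{$(1)$} together with $(\varphi^{-1})'(v)=1/\varphi'(u)$ to obtain \eqref{=M+}, then invoke \hyperlink{2P}{$(2)$} and \hyperlink{3P}{$(3)$} with \hyperlink{1P}{$(1)$} to pull out the nonnegative factor $\varphi''/\varphi'$. The only small point worth making explicit (left implicit both in your write-up and in the paper) is that the stated upper bound in \eqref{M-} requires applying \hyperlink{3P}{$(3)$} with the roles of $M$ and $N$ interchanged, since as written \hyperlink{3P}{$(3)$} gives $\mathcal{M}^{-}(M+N)\le\mathcal{M}^{+}(M)+\mathcal{M}^{-}(N)$ whereas the form $\mathcal{M}^{-}(M)+\mathcal{M}^{+}(N)$ is what appears in \eqref{M-}.
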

\begin{proof}
   We have $v_{x_i}=\varphi^{\prime}(u)u_{x_i}$ and $v_{x_ix_j}=\varphi^{\prime}(u)u_{x_ix_j}+\varphi^{\prime\prime}(u)u_{x_i}u_{x_j}$.  Hence
    \begin{equation}\label{comp-Hessian}
        D^{2}v= \varphi^\prime(u) D^2u+\varphi^{\prime\prime}(u)\nabla u\otimes\nabla u.
    \end{equation}
Hence,  from the basic property of \hyperlink{1P}{$(1)$}  above,  we can write
    \begin{equation}\label{=ineq1}
    \begin{aligned}
  \mathcal{M}^{\pm}( D^{2}v) &=  \mathcal{M}^{\pm}\big(\varphi^\prime(u) D^2u+\varphi^{\prime\prime}(u)\nabla u\otimes\nabla u\big)\\
  &= \varphi^\prime(u)\mathcal{M}^{\pm}\Big(D^2u+\frac{\varphi^{\prime\prime}(u)}{\varphi^{\prime}(u)}\nabla u\otimes\nabla u\Big)
  \end{aligned}
    \end{equation}
Noticing that $(\varphi^{-1})^{\prime}(v)=1/\varphi^{\prime}(u)$, \eqref{=ineq1} yields \eqref{=M+}. The inequalities \eqref{M+} and \eqref{M-} follow from \eqref{=ineq1}, and from the properties \hyperlink{2P}{$(2)$} and \hyperlink{3P}{$(3)$} above.
\end{proof}
The next result shows that the gradient type term $\nabla u\otimes\nabla u$ is invariant to the Pucci extremals operators by the Kazdan-Kramer change \eqref{a-KK}. 
\begin{lemma}\label{via-lemma}
Let $f:\Omega\times \mathbb [0,\infty)\to \mathbb {R}$ and $g:[0,\infty)\to [0,\infty)$ be continuous functions and let $\varphi_{g}$ and $G$ given by \eqref{a-KK}. 
Suppose that  $v\in C^{2}(\Omega)$, $v\ge 0$ solves the equation 
\begin{equation}\label{Pucci-eq1-transformed}
	\mathcal{M}^{\pm}_{\lambda,\Lambda}(D^2 v)+e^{G(\varphi^{-1}_g(v))}f(x, \varphi^{-1}_{g}(v))=0 \;\; \mbox{in} \;\; \Omega.
\end{equation}
Then, the function $u=\varphi^{-1}_{g}(v)$ is a non-negative solution to the equation 
\begin{equation}\label{Pucci-com-grad}
    \mathcal{M}^{\pm}_{\lambda,\Lambda}\big(D^2 u+g(u)\nabla u\otimes\nabla u\big)+f(x,u)=0	 \;\; \mbox{in} \;\; \Omega.
\end{equation}
Reciprocally, if $u\in C^{2}(\Omega)$, $u\ge0$ solves  \eqref{Pucci-com-grad}, then $v=\varphi_{g}(u)$ is a solution to \eqref{Pucci-eq1-transformed}. 
\end{lemma}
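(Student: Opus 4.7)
The plan is to apply Lemma~\ref{lemma1} directly to the specific diffeomorphism $\varphi=\varphi_g$, after checking that $\varphi_g$ satisfies the hypotheses of that lemma, and then rearrange the resulting identity to pass between \eqref{Pucci-eq1-transformed} and \eqref{Pucci-com-grad}.

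First I would record the elementary computations that make $\varphi_g$ fit the framework of Lemma~\ref{lemma1}. By \eqref{a-KK} we have $\varphi_g'(s)=e^{G(s)}>0$ and $\varphi_g''(s)=g(s)e^{G(s)}\ge 0$ (using $g\ge0$), so $\varphi_g\in C^2([0,\infty))$ is strictly increasing and convex, hence a $C^2$-diffeomorphism of $[0,\infty)$ onto itself with $\varphi_g(0)=0$. Two key identities follow: $\varphi_g''(s)/\varphi_g'(s)=g(s)$ and, setting $v=\varphi_g(u)$, $(\varphi_g^{-1})'(v)=1/\varphi_g'(u)=e^{-G(u)}$. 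Non-negativity is also preserved in both directions, since $\varphi_g$ maps $[0,\infty)$ to $[0,\infty)$ bijectively.

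Next I would invoke \eqref{=M+} in Lemma~\ref{lemma1} with $\varphi=\varphi_g$, yielding the master identity
\begin{equation*}
e^{-G(u)}\,\mathcal{M}^{\pm}_{\lambda,\Lambda}(D^2v)=\mathcal{M}^{\pm}_{\lambda,\Lambda}\bigl(D^2u+g(u)\,\nabla u\otimes\nabla u\bigr),
\end{equation*}
valid pointwise in $\Omega$ whenever $u\in C^2(\Omega)$ with $u\ge 0$ and $v=\varphi_g(u)$. This identity is precisely what is needed: the factor $e^{-G(u)}$ on the left will cancel against the factor $e^{G(\varphi_g^{-1}(v))}=e^{G(u)}$ appearing in the zero-order term of \eqref{Pucci-eq1-transformed}.

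For the forward implication, starting from a $C^2$ nonnegative solution $v$ of \eqref{Pucci-eq1-transformed}, I would set $u=\varphi_g^{-1}(v)\in C^2(\Omega)$, $u\ge0$, multiply \eqref{Pucci-eq1-transformed} by $e^{-G(u)}$, and apply the master identity to obtain \eqref{Pucci-com-grad}. The reverse implication is symmetric: given $u\in C^2(\Omega)$ with $u\ge0$ solving \eqref{Pucci-com-grad}, put $v=\varphi_g(u)\in C^2(\Omega)$ and rewrite the master identity as $\mathcal{M}^{\pm}_{\lambda,\Lambda}(D^2v)=e^{G(u)}\mathcal{M}^{\pm}_{\lambda,\Lambda}(D^2u+g(u)\nabla u\otimes\nabla u)=-e^{G(\varphi_g^{-1}(v))}f(x,\varphi_g^{-1}(v))$. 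There is no real obstacle here; the only point that deserves care is verifying the sign hypothesis $\varphi_g''\ge0$ needed to invoke Lemma~\ref{lemma1}, which relies exactly on the standing assumption $g\ge0$, and ensuring that the non-negativity of $u$ and $v$ are interchanged correctly under $\varphi_g$.
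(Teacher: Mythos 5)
Your proposal is correct and follows essentially the same route as the paper: verify that $\varphi_g$ satisfies the hypotheses of Lemma~\ref{lemma1}, invoke the identity \eqref{=M+} with $\varphi=\varphi_g$, and observe that the factor $e^{G(\varphi_g^{-1}(v))}$ in \eqref{Pucci-eq1-transformed} cancels against $(\varphi_g^{-1})'(v)=e^{-G(\varphi_g^{-1}(v))}$, so that each equation transforms into the other. The only cosmetic difference is that the paper writes a single two-line chain of equalities yielding the equivalence in both directions at once, whereas you spell out the forward and reverse implications separately.
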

\begin{proof}
As mentioned before,  $\varphi_{g}$ is a $C^2$ diffeomorphism on $[0,\infty)$ such that $\varphi_{g}(0)=0$, $\varphi^{\prime}_{g}>0$, $\varphi^{\prime\prime}_{g}\ge 0$, and ${\varphi^{\prime\prime}_{g}}/{\varphi^{\prime}_{g}}=g$ on $[0,\infty)$. In addition, $(\varphi^{-1}_{g})^{\prime}(t)= e^{-G(\varphi^{-1}_g(t))},\, t\ge 0$. Thus, if $v\ge0$ and $u=\varphi^{-1}_{g}(v)$, the identity \eqref{=M+} yields
\begin{equation}\nonumber
\begin{aligned}
    \mathcal{M}^{\pm}_{\lambda,\Lambda}\Big(D^2u+g(u)\nabla u\otimes\nabla u\Big)+f(x,u) &=(\varphi^{-1}_{g})^{\prime}(v)\mathcal{M}^{\pm}_{\lambda,\Lambda}( D^{2}v)+ f(x,\varphi^{-1}_{g}(v))\\
    &= e^{-G(\varphi^{-1}_g(v))}\left[\mathcal{M}^{\pm}_{\lambda,\Lambda}( D^{2}v)+ e^{G(\varphi^{-1}_g(v))}f(x,\varphi^{-1}_{g}(v))\right].
\end{aligned}
\end{equation}
 Thus,  $v$ solves \eqref{Pucci-eq1-transformed} if and only if $u$ solves \eqref{Pucci-com-grad}.
\end{proof}
\section{Existence for sub- and superlinear growths}
Let $h:\overline{\Omega}\times[0,\infty)\to \mathbb{R}$ be given by \eqref{f-transformed}, i.e, $h(x,s)=e^{G(\varphi^{-1}_{g}(s))}f(x,\varphi^{-1}_g(s))$. From \hyperlink{fg0}{$(fg_0)$} we can see that $h$ satisfies the assumption:
\begin{enumerate}
\item [$(h_0)$]\, $h$ is a H\"{o}lder continuous on $\overline{\Omega}\times [0,\infty)$, $h(x,0)=0$ and $h(x,s)\ge -\gamma s$, for all $(x,s)\in \overline{\Omega}\times [0,\infty)$ and for some $\gamma\ge 0$.
\end{enumerate}
In addition, by using the change $t=\varphi^{-1}_g(s)$ we have that
\begin{equation}\label{limINf}
     \liminf_{s\to \infty} \frac{h(x,s)}{s}=\liminf_{t\to \infty} \frac{e^{G(t)}f(x,t)}{\varphi_{g}(t)}\;\;\mbox{and}\;\; \liminf_{s\to 0} \frac{h(x,s)}{s}=\liminf_{t\to 0} \frac{e^{G(t)}f(x,t)}{\varphi_{g}(t)}
\end{equation}
and
\begin{equation}\label{limSUP}
    \limsup_{s\to \infty} \frac{h(x,s)}{s}=\limsup_{t\to \infty} \frac{e^{G(t)}f(x,t)}{\varphi_{g}(t)}\;\;\mbox{and}\;\; \limsup_{s\to 0} \frac{h(x,s)}{s}=\limsup_{t\to 0} \frac{e^{G(t)}f(x,t)}{\varphi_{g}(t)}.
\end{equation}
\begin{proof}[Proof of Theorem~\ref{thm1}] By using \hyperlink{fg_I}{$(fg_{I})$}, \eqref{limINf} and \eqref{limSUP} we have that $h$ satisfies the sublinear condition
\begin{enumerate}
\item [$(H_0)$]\, $\displaystyle\limsup_{s\to \infty} \frac{h(x,s)}{s}<\mu^{+}_1<\displaystyle\liminf_{s\to 0} \frac{h(x,s)}{s}\le \infty$, uniformly in $x\in\overline{\Omega}$.
\end{enumerate}
Since $h$ satisfies $(h_0)$ and $(H_0)$,  \cite[Theorem~1.1]{Sirakov} ensures the existence of a positive classical solution $v$ to \eqref{Pucci-eq1-transformed} with the boundary condition $v=0$ on $\partial\Omega$. According to Lemma~\ref{via-lemma}, the function $u=\varphi^{-1}_{g}(v)$ is a solution to \eqref{Pucci-com-grad} and since $\varphi^{-1}_{g}(0)=0$, we have $u=0$ on $\partial\Omega$. Thus,  $u$ is a positive solution for \hyperlink{Pmais}{$(\mathcal{P}^{+}_{g})$}.
\end{proof}

\begin{proof}[Proof of Theorem~\ref{thm2}] We already have that $h$ satisfies the $(h_0)$. In addition, from   \hyperlink{fg^I}{$(fg^{I})$}, \eqref{limINf} and \eqref{limSUP} we obtain that $h$ satisfies the superlinear condition
\begin{enumerate}
\item [$(H^0)$]\, $\displaystyle\limsup_{s\to 0} \frac{h(x,s)}{s}<\mu^{+}_1<\displaystyle\liminf_{s\to \infty} \frac{h(x,s)}{s}\le \infty$, uniformly in $x\in\overline{\Omega}$.
\end{enumerate}
For each $t\ge0$, let us consider the following family of  Pucci equations
\begin{equation}\nonumber
(\mathcal{P}_t)	\quad \quad \begin{cases}
	\mathcal{M}^{+}_{\lambda,\Lambda}(D^2 v)+h(x,v+t)=0	 & \mbox{in} \;\; \Omega\\
		 v=0  & \mbox{on}\;\;\partial \Omega.	
	\end{cases}
\end{equation}
 Denote by $A_t$ the set of nonnegative classical solutions of  $(\mathcal{P}_t)$ and set $\mathcal{S}_{t}=\displaystyle\cup_{0\le s\le t}A_s$.
Then \eqref{shift-solution} ensures $\|v\|_{L^{\infty}}\le c,\;\;\mbox{for all}\;\; v\in \mathcal{S}_{t}$,
 for some $c>0$ depending only on $t, \Omega$ and $h$. So, from  \cite[Theorem~1.2]{Sirakov}, the equation  \eqref{Pucci-eq1-transformed} admits a classical positive solution $v$. Now, the result follows from  Lemma~\ref{via-lemma} by choosing the solution $u=\varphi^{-1}_g(v)$.
 \end{proof}
 \section{Liouville type results and radially symmetric solutions}
 In this section we deal with Liouville type results and the existence and asymptotic behavior of positive radial solutions for the Pucci equations stated in Theorem~\ref{LPCutri}, Theorem~\ref{LP+Felmer}, Theorem~\ref{LP-Felmer}, Theorem~\ref{thm-radial1} and Theorem~\ref{thm-radial2}. Our main argument relies on results previously obtained by  Cutr\`{i}-Leoni  \cite{Cutri}, Felmer-Quaas \cite{Felmer} and Felmer-Quaas-Tang \cite{FelmerTang} for Pucci equations without the gradient type term $\nabla u\otimes\nabla u$. Let us recall the corresponding transformed function $h:[0,\infty)\to \mathbb{R}$ given by
\begin{equation}\label{L-f-transformed}
   h(s)=e^{G(\varphi^{-1}_{g}(s))}f(\varphi^{-1}_g(s))
\end{equation}
where $\varphi_g$ and  $G$ are given by \eqref{a-KK}. The following Lemma summarizes some useful properties.
\begin{lemma}\label{Autovia-lemma}
Let $f:[0,\infty)\to \mathbb {R}$ and $g:[0,\infty)\to [0,\infty)$ be continuous functions and let $h$ be given by \eqref{L-f-transformed}.  Let us consider the Pucci equations
\begin{equation}\label{AutoPucci-eq1-transformed}
\mathcal{M}^{\pm}_{\lambda,\Lambda}(D^2 v)+h(v)=0 ,\;\;\; v\ge 0\;\; \mbox{in} \;\; \mathbb{R}^n	
\end{equation}
and 
\begin{equation}\label{AutoPucci-com-grad}
    \mathcal{M}^{\pm}_{\lambda,\Lambda}\big(D^2 u+g(u)\nabla u\otimes\nabla u\big)+f(u)=0,\;\;\; u\ge 0	 \;\; \mbox{in} \;\; \mathbb{R}^n.
\end{equation}
Then
\begin{enumerate}
\item[$(a)$]  $v$ solves of  \eqref{AutoPucci-eq1-transformed} if and only if $u=\varphi^{-1}_{g}(v)$ solves \eqref{AutoPucci-com-grad}.
\item [$(b)$] $v$ is a radial pseudo-slow decaying solution of \eqref{AutoPucci-eq1-transformed} if and only if $u=\varphi^{-1}_{g}(v)$ is a $g$-pseudo-slow decaying solution of  \eqref{AutoPucci-com-grad}.
\item [$(c)$] $v$ is a radial slow decaying solution of \eqref{AutoPucci-eq1-transformed} if and only if $u=\varphi^{-1}_{g}(v)$ is a $g$-slow decaying solution of  \eqref{AutoPucci-com-grad}.
\item [$(d)$] $v$ is a radial fast decaying solution of \eqref{AutoPucci-eq1-transformed} if and only if $u=\varphi^{-1}_{g}(v)$ is a $g$-fast decaying solution of  \eqref{AutoPucci-com-grad}.
\end{enumerate}
\end{lemma}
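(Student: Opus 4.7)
The plan is to observe that this lemma is essentially a packaging of Lemma~\ref{via-lemma} combined with the direct translation of the decay definitions under the change of variables $v=\varphi_g(u)$. Part $(a)$ is a special case of Lemma~\ref{via-lemma}; parts $(b)$--$(d)$ reduce to the identity $v(r)=\varphi_g(u(r))$, which makes each of the three decay rates in the definition of $g$-pseudo-slow, $g$-slow, and $g$-fast decay literally equal to the corresponding classical Felmer--Quaas rate written in terms of $v$.

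For part $(a)$, I apply Lemma~\ref{via-lemma} with $\Omega=\mathbb{R}^n$ and with $f$ chosen independent of $x$. That lemma already gives that $v\in C^2(\mathbb{R}^n)$, $v\ge 0$ satisfies \eqref{AutoPucci-eq1-transformed} if and only if $u=\varphi_g^{-1}(v)\in C^2(\mathbb{R}^n)$, $u\ge 0$ satisfies \eqref{AutoPucci-com-grad}. Nothing further is needed; non-negativity of $u$ is preserved because $\varphi_g^{-1}(0)=0$ and $\varphi_g^{-1}$ is strictly increasing on $[0,\infty)$.

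For parts $(b)$--$(d)$, I first note that $\varphi_g$ is a $C^2$ diffeomorphism of $[0,\infty)$, so in particular any radial function $v=v(r)$ corresponds via $v=\varphi_g(u)$ to a radial $u=u(r)$, and vice versa (the compositions with a radial profile are again radial). Then the key step is the trivial substitution in the decay definitions. Writing $v(r)=\varphi_g(u(r))$, one has
\begin{equation*}
r^{\alpha}v(r)=r^{\alpha}\varphi_g(u(r)) \quad\text{and}\quad r^{\widetilde N-2}v(r)=r^{\widetilde N-2}\varphi_g(u(r)).
\end{equation*}
Taking $\liminf$ and $\limsup$ as $r\to\infty$ on both sides, the definition (from Remark~\ref{0-slowXg-slow}) of pseudo-slow, slow, and fast decay for $v$ coincides exactly with the definitions $(i)$, $(ii)$, $(iii)$ of $g$-pseudo-slow, $g$-slow, and $g$-fast decay for $u$ given prior to Theorem~\ref{LP+Felmer}. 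This yields the four equivalences $(b)$, $(c)$, $(d)$.

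There is no real obstacle here; the content is entirely bookkeeping, and the point of stating the lemma separately is to have a clean reference that lets the subsequent proofs of Theorems~\ref{LPCutri}, \ref{LP+Felmer} and \ref{LP-Felmer} invoke the existing Felmer--Quaas and Cutrì--Leoni results for \eqref{AutoPucci-eq1-transformed} (where $f(t)=\varphi_g^p(t)e^{-G(t)}$ precisely turns $h(s)=s^p$ into the classical pure-power nonlinearity) and then transport the classification back to \eqref{AutoPucci-com-grad} via $u=\varphi_g^{-1}(v)$. The only mild subtlety worth mentioning is that one should check positivity is preserved, which follows from $\varphi_g^{-1}$ being a strictly increasing bijection of $[0,\infty)$ fixing the origin.
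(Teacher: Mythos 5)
Your proof is correct and follows essentially the same route as the paper: part $(a)$ is an immediate application of Lemma~\ref{via-lemma} with $\Omega=\mathbb{R}^n$, and parts $(b)$--$(d)$ reduce to the identity $r^{\alpha}v(r)=r^{\alpha}\varphi_g(u(r))$ (and its analogue with $r^{\widetilde N-2}$), together with the observation that $\varphi_g$ being a strictly increasing $C^2$ diffeomorphism of $[0,\infty)$ fixing $0$ preserves radial symmetry and non-negativity. The only difference is that you spell out these bookkeeping steps in more detail than the paper does.
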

\begin{proof} The item $(a)$ follows from  Lemma~\ref{via-lemma}. It is clear that $v=v(r)$ a symmetric radially function if and only if $u(r)=\varphi^{-1}_g(v(r))$ is a symmetric radially function. Hence, the identity $r^{\alpha}v(r)=r^{\alpha}\varphi_g(u(r))$ yields $(b)$, $(c)$ and $(d)$.
\end{proof}
Now, under the assumption \eqref{fg-L}, the transformed function $h$ in  \eqref{L-f-transformed} becomes $h(s)=s^{p}$, $s\ge 0$. Thus,  the Pucci equation \eqref{Pucci-eq1-transformed} turns 
 \begin{equation}\label{CutriPucci-transformed}
	\mathcal{M}^{\pm}_{\lambda,\Lambda}(D^2 v)+v^{p}=0,\;\;\; v\ge 0 \;\; \mbox{in} \;\; \mathbb{R}^n.
\end{equation}
\begin{proof}[Proof of Theorem~\ref{LPCutri}] 
According to \cite{Cutri} (see also \cite{Felmer} and \cite[Theorem~1.3]{Sirakov}),  equation \eqref{CutriPucci-transformed} with  the operator $\mathcal{M}^{+}_{\lambda,\Lambda}$ (or $\mathcal{M}^{-}_{\lambda,\Lambda}$)  does not admit non-trivial (viscosity) solutions for  $1<p\le p^{s}_{+}$ (or $1<p\le p^{s}_{-}$).  Due to the ellipticity  of the Pucci operators $\mathcal{M}^{\pm}_{\lambda,\Lambda}$, the same conclusion remains valid for classical solutions instead of viscosity solutions. Thus,  from Lemma~\ref{Autovia-lemma}, item $(a)$  we can conclude Theorem~\ref{LPCutri}.
\end{proof}

\begin{proof}[Proof of Theorem~\ref{LP+Felmer}]  From \cite[Theorem~1.1]{Felmer},  equation \eqref{CutriPucci-transformed} with  the operator $\mathcal{M}^{+}_{\lambda,\Lambda}$ satisfies:
\begin{enumerate}
    \item [$(i)$]  it does not admit non-trivial radial solutions, if $1<p<p^{*}_{+}$
    \item  [$(ii)$]  it has a unique fast decaying radial solution, if $p=p^{*}_{+}$
    \item [$(iii)$]\,  it has a unique pseudo-slow decaying radial solution, if $p^{*}_{+}<p\le p^{p}_{+}$
    \item [$(iv)$]\,   it  has a unique slow decaying radial solution, if $p>p^{p}_{+}$.
\end{enumerate}
From this, Lemma~\ref{Autovia-lemma} ensures the result.
\end{proof}

\begin{proof}[Proof of Theorem~\ref{LP-Felmer}] Follows  analogously  to  Theorem~\ref{LP+Felmer} by combining  \cite[Theorem~1.2]{Felmer} with Lemma~\ref{Autovia-lemma}.
\end{proof}

\begin{proof}[Proof of Theorem~\ref{thm-radial1}] Under the assumption \eqref{fg-BR} the function $h$ in \eqref{L-f-transformed} becomes
\begin{equation*}
\begin{aligned}
h(s)=e^{G(\varphi^{-1}_g(s))}f(\varphi^{-1}_g(s))&=e^{G(\varphi^{-1}_g(s))}[-\gamma s e^{-G(\varphi^{-1}_{g}(s))}+\psi(\varphi_{g}^{-1}(s))]\\
&= -\gamma s+ \overline{h}(s)
\end{aligned}
\end{equation*}
where
\begin{equation}\label{H-radial1}
\overline{h}(s)=e^{G(\varphi^{-1}_g(s))}\psi(\varphi_{g}^{-1}(s)).
\end{equation}
Note that the change $t=\varphi^{-1}(s)$ yields
\begin{equation}\label{hpsi-limits-infty}
\begin{aligned}
\lim_{s\to \infty}\frac{\overline{h}(s)}{s^p}=\lim_{s\to \infty}\frac{e^{G(\varphi^{-1}_g(s))}\psi(\varphi_{g}^{-1}(s))}{s^{p}}
=\lim_{t\to \infty}\frac{e^{G(t)}\psi(t)}{\varphi^{p}_{g}(t)}
\end{aligned}
\end{equation}
and 
\begin{equation}\label{hpsi-limits-0}
\begin{aligned}
\lim_{s\to 0}\frac{\overline{h}(s)}{s}=\lim_{s\to 0}\frac{e^{G(\varphi^{-1}_g(s))}\psi(\varphi_{g}^{-1}(s))}{s}
=\lim_{t\to 0}\frac{e^{G(t)}\psi(t)}{\varphi_g(t)}.
\end{aligned}
\end{equation}
Thus, by combining \eqref{hpsi-limits-infty} and \eqref{hpsi-limits-0} with the assumptions \hyperlink{psi0}{$(\psi_0)$}, \hyperlink{psi1}{$(\psi_1)$} and \hyperlink{psi2}{$(\psi_2)$} we can see that $\overline{h}$ satisfies
\begin{enumerate}
    \item [\hypertarget{H0}{$(\overline{h}_0)$}]\, $\overline{h}\in C([0,\infty))$  and is locally Lipschitz on $[0,\infty)$
    \item  [\hypertarget{H1}{$(\overline{h}_1)$}]\,  $\overline{h}(s)\ge 0$ and there exists $1<p<p^{*}_{\pm}$ such that
    \begin{align*}
   \lim_{s\to \infty}\frac{\overline{h}(s)}{s^p}=C^*,\;\;\mbox{for some constant}\;\; C^*>0
    \end{align*}
     \item  [\hypertarget{H2}{$(\overline{h}_2)$}] \, there exists $c^*\ge0$ such that $c^*-\gamma<\mu^{\pm}_{1}$ and 
     \begin{align*}
    \lim_{s\to 0}\frac{\overline{h}(s)}{s}=c^*
    \end{align*}
    where $\mu^{\pm}_{1}$ represents the first eigenvalue  for $\mathcal{M}^{\pm}_{\lambda,\Lambda}$ in $B_R$.
\end{enumerate}
Under the hypotheses \hyperlink{H0}{$(\overline{h}_0)$},  \hyperlink{H1}{$(\overline{h}_1)$} and \hyperlink{H2}{$(\overline{h}_2)$}, \cite[Theorem~1.2]{FelmerQL} ensure the existence of a positive radially symmetric $C^2$ solution $v$ for the Pucci equation
\begin{equation}\nonumber
	\begin{cases}
	\mathcal{M}^{\pm}_{\lambda,\Lambda}(D^2 v)-\gamma v+\overline{h}(v)=0	 & \mbox{in} \;\; B_R\\
		 v=0  & \mbox{on}\;\;\partial B_R.	
	\end{cases}
\end{equation}
According to Lemma~\ref{via-lemma}, the function $u=\varphi^{-1}(v)$ is positive radially symmetric  $C^2$ solution of  \hyperlink{LP}{$(\mathcal{L}^{\pm}_{g})$}  with $u=0$ on $\partial B_R$.  
\end{proof}
\begin{proof}[Proof of Theorem~\ref{thm-radial2}] In the case $f(t)=[-\varphi_g(t)+\varphi^{p}_{g}(t)]e^{-G(t)}$   the function $h$ in \eqref{L-f-transformed} simply reduces to $h(s)=-s+s^p$. Hence, the result follows by combining \cite[Theorem~1.1]{FelmerTang} with Lemma~\ref{via-lemma}.
\end{proof}
\subsection*{Funding}
\begin{sloppypar}
 This work was partially supported by Conselho Nacional de Desenvolvimento Cient\'{i}fico e Tecnol\'{o}gico (\url{http://dx.doi.org/10.13039/501100003593}), Grants 312340/2021-4, 409764/2023-0, 443594/2023-6, 429285/2016-7, 309491/2021-5, CAPES MATH AMSUD grant 88887.878894/2023-00,  Fondo Nacional de Desarrollo Cient\'{i}fico y Tecnol\'{o}gico 
(\url{http://dx.doi.org/10.13039/501100002850}), Grants  1181125, 1161635 and 1171691, and
Funda\c{c}\~{a}o de Apoio \`{a} Pesquisa do Estado da Para\'{i}ba (\url{http://dx.doi.org/10.13039/501100005669}), Grant 3034/2021.
\end{sloppypar}

\end{document}